\documentclass[11pt]{article}
\usepackage[a4paper,margin=1in]{geometry}
\usepackage{amsmath,amssymb,amsthm,mathtools}
\usepackage{microtype}
\usepackage{enumitem}
\usepackage[hidelinks]{hyperref}
\usepackage[T1]{fontenc}
\usepackage{lmodern}

\newtheorem{theorem}{Theorem}
\newtheorem{lemma}{Lemma}
\theoremstyle{remark}
\newtheorem{remark}{Remark}

\newcommand{\norm}[1]{\left\lVert #1\right\rVert}

\title{Stability Refinements of the Triangle Inequality in $L^p$ Spaces}
\author{Ruizhou Song\\
\small Student, Wenzhou High School, Zhejiang 325014, China\\
\small \texttt{19016820799@163.com}}
\date{}

\begin{document}
\maketitle

\begin{abstract}
Let $(X,\mu)$ be a measure space and let $1<p<\infty$. We study quantitative stability refinements of Minkowski's inequality
\[
\norm{f+g}_p\le \norm f_p+\norm g_p
\]
for real-valued functions in $L^p(X,\mu)$. We first establish a stability estimate for arbitrary real-valued functions and show that its constant is sharp. We then prove that, for nonnegative functions, the constant can be improved when $p\ge 2$, again to its optimal value. More precisely, if $f,g\ge0$ and $f,g\ne0$, then
\[
\norm{f+g}_p\le \norm f_p+\norm g_p
-c_p\min\{\norm f_p,\norm g_p\}
\norm{\frac{f}{\norm f_p}-\frac{g}{\norm g_p}}_p^{\alpha_p},
\]
where
\[
\alpha_p=\begin{cases}
2,&1<p\le2,\\
p,&2<p<\infty,
\end{cases}
\qquad
c_p=\begin{cases}
\dfrac{p-1}{4},&1<p\le2,\\[4pt]
\dfrac{1-2^{1-p}}{p},&2<p<\infty.
\end{cases}
\]
Both constants are best possible.
\end{abstract}

\section{Introduction and main results}

The triangle inequality in $L^p$ spaces, namely Minkowski's inequality,
\[
\norm{f+g}_p\le \norm f_p+\norm g_p,
\]
is one of the most fundamental inequalities in analysis. This paper studies a question proposed by Professor Gangsong Leng: does the triangle inequality for $L^p$ functions, and especially for nonnegative $L^p$ functions, admit a stability refinement controlled by the difference between the normalized directions of the two functions? Motivated by this question, we obtain the results below.

Equality in Minkowski's inequality essentially requires the two functions to point in the same direction. Thus, in order to quantify the deviation from equality, comparing only $\norm f_p$ and $\norm g_p$ is insufficient. A more natural quantity is the normalized directional distance
\[
D_p(f,g)=\norm{\frac{f}{\norm f_p}-\frac{g}{\norm g_p}}_p.
\]
The purpose of this paper is to bound the deficit in Minkowski's inequality from below in terms of $D_p(f,g)$.

There is a substantial literature on stability refinements of classical inequalities. Aldaz established a stability version of H\"older's inequality and derived related estimates for the triangle inequality~\cite{Aldaz}. Carlen, Frank, Ivanisvili, and Lieb systematically studied refinements of the triangle inequality and complementary forms of Hanner's inequality~\cite{CFIL}. We first prove a stability estimate for general real-valued $L^p$ functions. We then show that, on the nonnegative cone, the constant can be improved substantially for $p\ge2$, and that the improved constant is sharp.

\begin{theorem}[General stability estimate]\label{thm:general}
Let $(X,\mu)$ be a measure space, let $1<p<\infty$, and let $f,g\in L^p(X,\mu)$ be nonzero real-valued functions. Then
\[
\norm{f+g}_p\le \norm f_p+\norm g_p
-C_p\min\{\norm f_p,\norm g_p\}
\norm{\frac{f}{\norm f_p}-\frac{g}{\norm g_p}}_p^{\alpha_p},
\]
where
\[
\alpha_p=\begin{cases}
2,&1<p\le2,\\
p,&2<p<\infty,
\end{cases}
\qquad
C_p=\begin{cases}
\dfrac{p-1}{4},&1<p\le2,\\[4pt]
\dfrac{1}{p2^{p-1}},&2<p<\infty.
\end{cases}
\]
Both constants are best possible.
\end{theorem}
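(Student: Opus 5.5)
Our plan is to reduce the statement to a uniform convexity estimate for the unit ball of $L^p$. Without loss of generality let $a=\norm f_p\le b=\norm g_p$, and write $u=f/a$, $v=g/b$, so that $\norm u_p=\norm v_p=1$ and $D=D_p(f,g)=\norm{u-v}_p\in[0,2]$. Then
\[
f+g=a u+b v=2a\cdot\frac{u+v}{2}+(b-a)v,
\]
and the triangle inequality gives
\[
\norm{f+g}_p\le 2a\norm{\tfrac{u+v}{2}}_p+(b-a)=a+b-2a\Bigl(1-\norm{\tfrac{u+v}{2}}_p\Bigr).
\]
So it suffices to prove the midpoint estimate $1-\norm{(u+v)/2}_p\ge \tfrac{C_p}{2}D^{\alpha_p}$ for unit vectors $u,v$. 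This is exactly a lower bound on the modulus of convexity $\delta_p(\varepsilon)$ of $L^p$.

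For $p\ge2$ we use Clarkson's inequality
\[
\norm{\tfrac{u+v}{2}}_p^p+\norm{\tfrac{u-v}{2}}_p^p\le 1,
\]
which gives $\norm{(u+v)/2}_p\le (1-(D/2)^p)^{1/p}\le 1-\tfrac1p (D/2)^p$ by concavity of $t\mapsto t^{1/p}$ (Bernoulli). Hence $2\bigl(1-\norm{(u+v)/2}_p\bigr)\ge \tfrac{2}{p2^p}D^p=\tfrac{1}{p2^{p-1}}D^p$, which is the claim with $C_p=1/(p2^{p-1})$.

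For $1<p\le2$ we use the sharp $2$-uniform convexity inequality of Ball--Carlen--Lieb,
\[
\norm{\tfrac{u+v}{2}}_p^2+(p-1)\norm{\tfrac{u-v}{2}}_p^2\le \tfrac12\bigl(\norm u_p^2+\norm v_p^2\bigr)=1.
\]
This gives $\norm{(u+v)/2}_p\le\sqrt{1-(p-1)D^2/4}\le 1-\tfrac{p-1}{8}D^2$, and therefore a deficit of at least $\tfrac{p-1}{4}aD^2$. We expect to cite this inequality rather than reprove it. If a self-contained proof is wanted, it reduces via Hanner's inequality to a two-point inequality; that step is the main technical obstacle.

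For sharpness, take $a=b$. For $p\le2$, work on two atoms of mass $1$ with $u=(1+\varepsilon,1-\varepsilon)$ and $v=(1-\varepsilon,1+\varepsilon)$, suitably normalized. The deficit is $\sim\frac{p-1}{2}\varepsilon^2\cdot a$, while $D^2\sim 4\varepsilon^2\cdot 2^{2/p}/2^{2/p}$; checking the limit $\varepsilon\to0$ should give the ratio $(p-1)/4$, matching the second-order Taylor expansion of $\norm{\cdot}_p$. For $p>2$, take disjointly supported $u=\mathbf 1_A$ and $v=-\mathbf 1_B$ with $\mu(A)=\mu(B)=1$. Then $\norm{u+v}_p=2^{1/p}$, $D=2^{1/p}$, and the deficit is $2-2^{1/p}$, which is not of the form $C D^p$ exactly. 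Instead we should use $u=\mathbf 1_{A}+t\mathbf 1_{B}$ and $v=\mathbf 1_A-t\mathbf 1_B$ (normalized), with $t\to0$. Then $D\approx 2t$ and the deficit is $2\bigl(1-(1+t^p)^{-1/p}\cdot1\bigr)\approx \tfrac{2}{p}t^p=\tfrac{2}{p2^p}D^p$, which matches $C_p$. The $p\le2$ case is verified analogously, with a perturbation taken on a two-point space in the regime where Ball--Carlen--Lieb is asymptotically tight.
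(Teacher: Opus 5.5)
Your proof is correct and is essentially the paper's argument. The ingredients match: the same splitting $f+g=a(u+v)+(b-a)v$, the Ball--Carlen--Lieb inequality for $1<p\le2$, Clarkson's inequality for $p\ge2$ (for unit vectors this is exactly the bound the paper takes from Hanner's inequality), the tangent-line bound for $t\mapsto t^{1/p}$, and two-atom extremizers. Your $p>2$ example is the paper's up to normalization.

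There is one slip in the $p\le2$ sharpness check. Set $M_\varepsilon=((1+\varepsilon)^p+(1-\varepsilon)^p)^{1/p}=2^{1/p}\bigl(1+\tfrac{p-1}{2}\varepsilon^2+O(\varepsilon^4)\bigr)$. Then your symmetric pair gives $\norm{u+v}_p=2^{1+1/p}/M_\varepsilon=2-(p-1)\varepsilon^2+O(\varepsilon^4)$ and $\norm{u-v}_p=2\varepsilon+O(\varepsilon^3)$.

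So the deficit is $(p-1)\varepsilon^2a$, not $\tfrac{p-1}{2}\varepsilon^2a$. The latter is the midpoint deficit $1-\norm{(u+v)/2}_p$. With the correct value the ratio is exactly $(p-1)/4$, whereas the numbers you wrote would give $(p-1)/8$.

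With this computation written out, your closing sentence about verifying the $p\le2$ case ``analogously'' is no longer needed.
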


\begin{theorem}[Optimal stability estimate for nonnegative functions]\label{thm:positive}
Let $(X,\mu)$ be a measure space, let $1<p<\infty$, and let $f,g\in L^p(X,\mu)$ be nonzero and nonnegative. Then
\[
\norm{f+g}_p\le \norm f_p+\norm g_p
-c_p\min\{\norm f_p,\norm g_p\}
\norm{\frac{f}{\norm f_p}-\frac{g}{\norm g_p}}_p^{\alpha_p},
\]
where
\[
\alpha_p=\begin{cases}
2,&1<p\le2,\\
p,&2<p<\infty,
\end{cases}
\qquad
c_p=\begin{cases}
\dfrac{p-1}{4},&1<p\le2,\\[4pt]
\dfrac{1-2^{1-p}}{p},&2<p<\infty.
\end{cases}
\]
The constant $c_p$ is best possible in each of the two ranges.
\end{theorem}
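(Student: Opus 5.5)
For $1<p\le 2$ we have $c_p=C_p=\frac{p-1}{4}$ and the same exponent $\alpha_p=2$. So in this range Theorem~\ref{thm:positive} follows directly from Theorem~\ref{thm:general} applied to the nonnegative pair $f,g$. Sharpness in this range must then come from a nonnegative extremal family. I would check that the near-extremal configuration used for Theorem~\ref{thm:general} can be realised by nonnegative functions, which is plausible because that regime is governed by small perturbations $u\approx v$ of a positive function. The real work is therefore the range $p>2$.

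\textbf{Reduction to unit vectors.} Write $a=\norm f_p$, $b=\norm g_p$, $u=f/a$, $v=g/b$, and assume without loss of generality that $a\le b$. From $f+g=a(u+v)+(b-a)v$ and the triangle inequality,
\[
\norm{f+g}_p\le a\norm{u+v}_p+(b-a),
\]
so the deficit is at least $a\bigl(2-\norm{u+v}_p\bigr)$. It then suffices to prove, for nonnegative $u,v$ with $\norm u_p=\norm v_p=1$ and $d=\norm{u-v}_p$,
\[
2-\norm{u+v}_p\ \ge\ \frac{1-2^{1-p}}{p}\,d^p .
\]

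\textbf{Key pointwise inequality (main obstacle).} I would aim for a Clarkson-type inequality with a larger constant on the nonnegative cone: for $s,t\ge0$ and $p\ge2$,
\[
(s+t)^p+(2^{p-1}-1)|s-t|^p\le 2^{p-1}(s^p+t^p).
\]
Equality holds both at $t=0$ and at $s=t$, which suggests the constant $2^{p-1}-1$ is optimal. By symmetry and homogeneity it reduces to showing
\[
h(s)=2^{p-1}(1+s^p)-(1+s)^p-(2^{p-1}-1)(1-s)^p\ge 0,\qquad s\in[0,1],
\]
with $h(0)=h(1)=0$. This one-variable inequality is the step I expect to be hardest. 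I would first try to prove it by studying the sign changes of $h'$, or by checking $h''$ on subintervals, using $h'(1)=2^{p-1}p-2^{p-1}p=0$. If that fails, I would try the substitution $s=(1-x)/(1+x)$ and expand.

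\textbf{Deriving the bound.} Integrating the pointwise inequality gives $\norm{u+v}_p^p\le 2^p-(2^{p-1}-1)d^p$. Then
\[
\norm{u+v}_p\le 2\bigl(1-(2^{p-1}-1)2^{-p}d^p\bigr)^{1/p}\le 2-\frac{2(2^{p-1}-1)}{p2^p}d^p=2-\frac{1-2^{1-p}}{p}d^p,
\]
where the middle step uses $(1-x)^{1/p}\le 1-x/p$. Note that $d^p\le 2$ for nonnegative unit vectors, so $x\le 1$ and the expression is well defined.

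\textbf{Sharpness.} Take $f=g$-normalized data on two atoms of unit mass, with $a=b$, $v=(1,0)$ and $u=\bigl((1-\delta^p)^{1/p},\delta\bigr)$. Then $d^p=\delta^p+O(\delta^{2p})$, and
\[
\norm{u+v}_p^p=2^p-(2^{p-1}-1)\delta^p+o(\delta^p).
\]
Hence
\[
\frac{2-\norm{u+v}_p}{d^p}\to\frac{1-2^{1-p}}{p}\quad\text{as }\delta\to0 .
\]
Since $a=b$, the first reduction step is an equality, so the deficit is exactly $a\bigl(2-\norm{u+v}_p\bigr)$ and the constant $c_p$ cannot be improved.
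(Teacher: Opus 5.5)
Your architecture matches the paper's: the splitting $f+g=a(u+v)+(b-a)v$, the pointwise inequality $(s+t)^p+(2^{p-1}-1)|s-t|^p\le 2^{p-1}(s^p+t^p)$ on the positive cone, integration, $(1-x)^{1/p}\le 1-x/p$, and the same extremal family $v=(1,0)$, $u=\bigl((1-\delta^p)^{1/p},\delta\bigr)$ for $p>2$. But the step that carries the whole $p>2$ case, namely $h\ge0$ on $[0,1]$, is never proved. You only list strategies, and your fallback does not close as stated. The substitution $s=(1-x)/(1+x)$ is the right one. Multiplying by $(1+x)^p/2^p$ turns $h(s)\ge0$ into
\[
\frac{(1+x)^p+(1-x)^p}{2}-1\ge(2^{p-1}-1)x^p,\qquad 0\le x\le1,
\]
which is exactly the paper's form. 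However, ``expanding'' cannot prove this for non-integer $p$. The term $x^p$ does not appear in the binomial series, and the coefficients $\binom{p}{2k}$ eventually become negative (for $2<p<3$, every one with $2k\ge4$ is negative). The missing idea is to divide by $x^p$ and prove monotonicity. For $H(x)=x^{-p}\bigl(\tfrac{(1+x)^p+(1-x)^p}{2}-1\bigr)$ a direct computation gives
\[
H'(x)=\frac{p}{2x^{p+1}}\bigl(2-(1+x)^{p-1}-(1-x)^{p-1}\bigr)\le0 .
\]
The sign follows from convexity of $t\mapsto t^{p-1}$, which is where $p\ge2$ is used. Hence $H(x)\ge H(1)=2^{p-1}-1$, and this is the whole content of the key inequality.

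Sharpness for $1<p\le2$ is also left unverified. You do not know which extremal Theorem~\ref{thm:general} uses, so saying it is plausible that it is nonnegative is not an argument. You need an explicit nonnegative family in $\ell^p_2$, for example
\[
u=2^{-1/p}(1,1),\qquad v_\varepsilon=\frac{(1+\varepsilon,1-\varepsilon)}{\norm{(1+\varepsilon,1-\varepsilon)}_p}.
\]
You then need a second-order expansion as $\varepsilon\to0^+$ showing $2-\norm{u+v_\varepsilon}_p\sim\frac{p-1}{4}\norm{u-v_\varepsilon}_p^2$. This works because, at the diagonal point, the Hessian of $\norm{\cdot}_p$ in the tangent direction $(1,-1)$ equals exactly $(p-1)\norm{\cdot}_p^2$. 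Without this example, the claim that $c_p$ is best possible holds only in the range $p>2$.
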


\begin{remark}
Theorem~\ref{thm:general} serves as a comparison result for arbitrary real-valued functions. Theorem~\ref{thm:positive} is the main result of the paper. On the nonnegative cone and for $p\ge2$, the constant improves from $1/(p2^{p-1})$ to $(1-2^{1-p})/p$. For example, when $p=4$, it improves from $1/32$ to $7/32$. Thus nonnegativity is not merely a formal restriction; it yields a genuine quantitative strengthening.
\end{remark}

\section{Two lemmas}

Throughout this section, all functions are real-valued and belong to $L^p(X,\mu)$ on the same measure space.

\begin{lemma}\label{lem:smallp}
Let $1<p\le2$, and let $u,v\in L^p(X,\mu)$ satisfy $\norm u_p=\norm v_p=1$. Then
\[
\norm{u+v}_p\le 2-\frac{p-1}{4}\norm{u-v}_p^2.
\]
\end{lemma}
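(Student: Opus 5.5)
This lemma is a uniform convexity estimate for $L^p$ with $1<p\le2$, and the natural tool is the optimal $2$-uniform convexity inequality of Ball--Carlen--Lieb:
\[
\norm{x+y}_p^2+(p-1)\norm{x-y}_p^2\le 2\left(\norm x_p^2+\norm y_p^2\right),\qquad 1<p\le 2,
\]
valid for all $x,y\in L^p$. (Equivalently, in the form $\left(\frac{\norm{x+y}_p^2+\norm{x-y}_p^2}{2}\right)\ge \norm x_p^2+(p-1)\norm y_p^2$, after substituting $x\mapsto \frac{u+v}{2}$, $y\mapsto \frac{u-v}{2}$.) I would invoke it directly rather than reprove it. Its proof reduces, via Hanner/Clarkson-type arguments, to a two-point inequality and Minkowski in $L^{p/2}$, which is the genuinely hard part. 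If I had to prove it, I would follow Ball--Carlen--Lieb: first show the pointwise/two-point inequality $\left(\frac{|a+b|^p+|a-b|^p}{2}\right)^{2/p}\ge a^2+(p-1)b^2$, then integrate using the reverse Minkowski inequality for the exponent $2/p\ge1$ applied to the functions $\left(\frac{|x+y|^p+|x-y|^p}{2}\right)^{2/p}$.

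With $\norm u_p=\norm v_p=1$, write $s=\norm{u+v}_p$ and $d=\norm{u-v}_p$. The inequality gives $s^2+(p-1)d^2\le 4$, that is,
\[
s\le \sqrt{4-(p-1)d^2}=2\sqrt{1-\tfrac{(p-1)d^2}{4}} .
\]
Note that $d\le 2$ and $p-1\le1$, so the radicand lies in $[0,4]$.

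The remaining step is elementary. For $t=\frac{(p-1)d^2}{4}\in[0,1]$ we have $\sqrt{1-t}\le 1-\frac t2$, since squaring gives $1-t\le 1-t+\frac{t^2}{4}$. Hence
\[
s\le 2\left(1-\frac{(p-1)d^2}{8}\right)=2-\frac{p-1}{4}d^2,
\]
which is exactly the claim.

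The main obstacle is therefore only the validity of the sharp $2$-uniform convexity inequality with constant $p-1$. Once it is available, the lemma is a two-line computation. I would cite it as a known result rather than reproving the two-point inequality.
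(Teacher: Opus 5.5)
Your proposal is correct and follows essentially the same route as the paper. The Ball--Carlen--Lieb inequality with $\norm u_p=\norm v_p=1$ gives $\norm{u+v}_p^2+(p-1)\norm{u-v}_p^2\le 4$, and then $\sqrt{1-t}\le 1-t/2$ finishes; one small slip in your sketch of the cited result is that the reverse Minkowski step takes place in $L^{p/2}$ with $p/2\le 1$, applied to $|x|^2$ and $(p-1)|y|^2$, not with exponent $2/p$.
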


\begin{proof}
A Clarkson--Hanner type estimate, equivalently the Ball--Carlen--Lieb uniform convexity inequality, gives
\[
\norm{\frac{u+v}{2}}_p^2+(p-1)\norm{\frac{u-v}{2}}_p^2
\le \frac{\norm u_p^2+\norm v_p^2}{2}=1.
\]
Hence
\[
\norm{u+v}_p
\le 2\left(1-\frac{p-1}{4}\norm{u-v}_p^2\right)^{1/2}.
\]
Using $\sqrt{1-t}\le1-t/2$ yields
\[
\norm{u+v}_p\le 2-\frac{p-1}{4}\norm{u-v}_p^2.
\]
\end{proof}

\begin{lemma}\label{lem:largep}
Let $2\le p<\infty$, and let $u,v\in L^p(X,\mu)$ satisfy $\norm u_p=\norm v_p=1$.
\begin{enumerate}[label=\textup{(\roman*)}]
\item For arbitrary real-valued $u,v$,
\[
\norm{u+v}_p\le 2-\frac{1}{p2^{p-1}}\norm{u-v}_p^p.
\]
\item If, in addition, $u,v\ge0$, then
\[
\norm{u+v}_p\le 2-\frac{1-2^{1-p}}{p}\norm{u-v}_p^p.
\]
\end{enumerate}
\end{lemma}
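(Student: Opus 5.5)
The plan is to reduce both parts to integrated pointwise inequalities of Clarkson type. In each case we bound $\norm{u+v}_p^p$ by $2^p(1-k\norm{u-v}_p^p)$ for a suitable $k>0$, and then take $p$-th roots using the Bernoulli-type bound $(1-t)^{1/p}\le 1-t/p$ for $t\in[0,1]$. This bound holds because $t\mapsto(1-t)^{1/p}$ is concave and has slope $-1/p$ at $t=0$.

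\textbf{Part (i).} For $p\ge2$ we use Clarkson's inequality
\[
\norm{\tfrac{u+v}{2}}_p^p+\norm{\tfrac{u-v}{2}}_p^p\le \tfrac12\bigl(\norm u_p^p+\norm v_p^p\bigr)=1 .
\]
It gives $\norm{u+v}_p\le 2\bigl(1-2^{-p}\norm{u-v}_p^p\bigr)^{1/p}$. Note that $2^{-p}\norm{u-v}_p^p\le 1$, since $\norm{u-v}_p\le 2$. The Bernoulli bound then yields
\[
\norm{u+v}_p\le 2-\frac{2}{p2^p}\norm{u-v}_p^p=2-\frac{1}{p2^{p-1}}\norm{u-v}_p^p .
\]

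\textbf{Part (ii).} We reverse-engineer the required improvement. To reach $2-\frac{1-2^{1-p}}{p}\norm{u-v}_p^p$ through the same root-taking step, it suffices to show
\[
\int (u+v)^p\,d\mu+(2^{p-1}-1)\int|u-v|^p\,d\mu\le 2^{p-1}\int (u^p+v^p)\,d\mu=2^p .
\]
This is because it gives $\norm{u+v}_p^p\le 2^p\bigl(1-\tfrac{2^{p-1}-1}{2^p}\norm{u-v}_p^p\bigr)$, and $\tfrac{2}{p}\cdot\tfrac{2^{p-1}-1}{2^p}=\tfrac{1-2^{1-p}}{p}$. (Here $\tfrac{2^{p-1}-1}{2^p}\norm{u-v}_p^p\le 1$, because for $u,v\ge0$ we have $|u-v|\le\max(u,v)$ and hence $\norm{u-v}_p^p\le 2$.)

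So the whole matter reduces to the scalar inequality
\[
(a+b)^p+(2^{p-1}-1)|a-b|^p\le 2^{p-1}(a^p+b^p),\qquad a,b\ge0 .
\]
It is an equality both at $a=b$ and at $b=0$, which shows that nonnegativity is essential and that the constant is the best this method allows.

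\textbf{Main step: the scalar inequality.} By symmetry and homogeneity, take $a>b\ge0$ and set $s=(a+b)/(a-b)\ge1$. Dividing by $(a-b)^p$ and using $a=\tfrac{(s+1)(a-b)}{2}$, $b=\tfrac{(s-1)(a-b)}{2}$, the inequality becomes
\[
h(s):=\frac{(s+1)^p+(s-1)^p}{2}-s^p\ \ge\ 2^{p-1}-1=h(1).
\]
It therefore suffices to show that $h$ is nondecreasing on $[1,\infty)$. We compute
\[
h'(s)=p\Bigl[\tfrac12\bigl((s+1)^{p-1}+(s-1)^{p-1}\bigr)-s^{p-1}\Bigr].
\]
This is nonnegative by convexity of $t\mapsto t^{p-1}$ on $[0,\infty)$ for $p\ge2$, via Jensen's inequality at the midpoint $s$ of $s\pm1$.

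The case $a=b$ is trivial. Integrating the pointwise inequality with $a=u(x)$, $b=v(x)$ then completes the proof. I expect this monotonicity argument to be the only genuinely new point; choosing the substitution $s=(a+b)/(a-b)$, rather than expanding in $x=(a-b)/(a+b)$, avoids having to deal with binomial series whose coefficients change sign for non-integer $p$.
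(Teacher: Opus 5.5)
Your proposal is correct and follows the paper's argument essentially step for step. In part (i), Clarkson's inequality at unit norms gives exactly the bound $\norm{u+v}_p^p+\norm{u-v}_p^p\le 2^p$ that the paper extracts from Hanner's inequality. In part (ii), you prove the same pointwise inequality: your $h(s)$ is precisely the paper's $H(1/s)$, so your convexity-based monotonicity step is the paper's after a reciprocal substitution, and your extra check that the Bernoulli step is applied with $t\le 1$ fills a point the paper leaves implicit.
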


\begin{proof}
For part (i), Hanner's inequality for $p\ge2$ gives
\[
\norm{u+v}_p^p+\norm{u-v}_p^p
\le (\norm u_p+\norm v_p)^p+|\norm u_p-\norm v_p|^p=2^p.
\]
Therefore
\[
\norm{u+v}_p\le 2\left(1-\frac{\norm{u-v}_p^p}{2^p}\right)^{1/p}.
\]
Since $(1-t)^{1/p}\le1-t/p$, we obtain
\[
\norm{u+v}_p\le 2-\frac{1}{p2^{p-1}}\norm{u-v}_p^p.
\]

For part (ii), we first prove the scalar inequality
\begin{equation}\label{eq:scalar}
(a+b)^p+(2^{p-1}-1)|a-b|^p\le 2^{p-1}(a^p+b^p)
\end{equation}
for all $a,b\ge0$. By homogeneity, we may assume that $a+b=1$. Put $s=|a-b|\in[0,1]$. Then
\[
a=\frac{1+s}{2},\qquad b=\frac{1-s}{2},
\]
and \eqref{eq:scalar} is equivalent to
\begin{equation}\label{eq:scalar2}
\frac{(1+s)^p+(1-s)^p}{2}\ge 1+(2^{p-1}-1)s^p.
\end{equation}
For $0<s\le1$, define
\[
H(s)=\frac{\frac{(1+s)^p+(1-s)^p}{2}-1}{s^p}.
\]
A direct differentiation gives
\[
H'(s)=\frac{p}{2s^{p+1}}
\left(2-(1+s)^{p-1}-(1-s)^{p-1}\right)\le0,
\]
because $t\mapsto t^{p-1}$ is convex on $[0,\infty)$. Thus $H$ is decreasing on $(0,1]$, and hence
\[
H(s)\ge H(1)=2^{p-1}-1.
\]
This proves \eqref{eq:scalar2}, and therefore \eqref{eq:scalar}.

Applying \eqref{eq:scalar} pointwise to the nonnegative functions $u$ and $v$ and then integrating, we obtain
\[
\norm{u+v}_p^p+(2^{p-1}-1)\norm{u-v}_p^p
\le 2^{p-1}(\norm u_p^p+\norm v_p^p)=2^p.
\]
Consequently,
\[
\norm{u+v}_p
\le2\left(1-\frac{2^{p-1}-1}{2^p}\norm{u-v}_p^p\right)^{1/p}.
\]
Using $(1-t)^{1/p}\le1-t/p$ once more gives
\[
\norm{u+v}_p
\le2-\frac{2^{p-1}-1}{p2^{p-1}}\norm{u-v}_p^p
=2-\frac{1-2^{1-p}}{p}\norm{u-v}_p^p.
\]
\end{proof}

\begin{remark}
The scalar inequality \eqref{eq:scalar} may be regarded as a simple Clarkson-type inequality on the positive cone. Equality holds both when $a=b$ and when $ab=0$.
\end{remark}

\section{Proofs of the theorems}

Set
\[
A=\norm f_p,\qquad B=\norm g_p,\qquad
u=\frac{f}{A},\qquad v=\frac{g}{B}.
\]
Then $\norm u_p=\norm v_p=1$. Without loss of generality, assume that $A\ge B$. We may write
\[
f+g=Au+Bv=(A-B)u+B(u+v).
\]
By Minkowski's inequality,
\[
\norm{f+g}_p
\le(A-B)\norm u_p+B\norm{u+v}_p
=(A-B)+B\norm{u+v}_p.
\]

If $1<p\le2$, Lemma~\ref{lem:smallp} yields
\[
\norm{f+g}_p\le A+B-\frac{p-1}{4}B\norm{u-v}_p^2.
\]
If $2\le p<\infty$, Lemma~\ref{lem:largep}(i) yields
\[
\norm{f+g}_p\le A+B-\frac{1}{p2^{p-1}}B\norm{u-v}_p^p.
\]
Since $B=\min\{A,B\}$, Theorem~\ref{thm:general} follows.

Now suppose in addition that $f,g\ge0$. Then $u,v\ge0$. For $1<p\le2$, we again apply Lemma~\ref{lem:smallp}; for $2\le p<\infty$, we instead apply Lemma~\ref{lem:largep}(ii). This proves Theorem~\ref{thm:positive}.

\section{Sharpness of the constants}

We now show that none of the constants in Theorems~\ref{thm:general} and~\ref{thm:positive} can be increased. Since the asserted inequalities reduce to their unit-norm forms when $\norm f_p=\norm g_p=1$, it suffices to test sharpness in two-dimensional $\ell^p$.

First consider Theorem~\ref{thm:general} for $1<p\le2$. Let
\[
u=\frac{(1,1)}{2^{1/p}},\qquad
v_\varepsilon=
\frac{(1+\varepsilon,1-\varepsilon)}{\bigl((1+\varepsilon)^p+(1-\varepsilon)^p\bigr)^{1/p}},
\]
where $\varepsilon>0$ is sufficiently small. Then $\norm u_p=\norm{v_\varepsilon}_p=1$. A second-order expansion as $\varepsilon\to0^+$ gives
\[
2-\norm{u+v_\varepsilon}_p
\sim \frac{p-1}{4}\norm{u-v_\varepsilon}_p^2.
\]
Thus $(p-1)/4$ cannot be increased. Since this example is nonnegative, it also proves the sharpness of the same constant in Theorem~\ref{thm:positive} for $1<p\le2$.

For Theorem~\ref{thm:general} in the range $2\le p<\infty$, signed vectors are needed. Let
\[
u_t=\bigl((1-t^p)^{1/p},t\bigr),\qquad
v_t=\bigl((1-t^p)^{1/p},-t\bigr),\qquad 0<t<1.
\]
Then $\norm{u_t}_p=\norm{v_t}_p=1$, while
\[
\norm{u_t-v_t}_p=2t,
\qquad
\norm{u_t+v_t}_p=2(1-t^p)^{1/p}.
\]
Hence, as $t\to0^+$,
\[
2-\norm{u_t+v_t}_p
=2-2(1-t^p)^{1/p}
\sim \frac{2}{p}t^p
=\frac{1}{p2^{p-1}}\norm{u_t-v_t}_p^p.
\]
Therefore the constant $1/(p2^{p-1})$ in Theorem~\ref{thm:general} is sharp.

Finally, consider the improved constant in Theorem~\ref{thm:positive} for $2\le p<\infty$. Let
\[
u=(1,0),\qquad
v_t=\bigl((1-t^p)^{1/p},t\bigr),\qquad 0<t<1.
\]
Then $u,v_t\ge0$ and $\norm u_p=\norm{v_t}_p=1$. As $t\to0^+$,
\[
\norm{u-v_t}_p^p=t^p+o(t^p),
\]
whereas
\[
\norm{u+v_t}_p^p
=\left(1+(1-t^p)^{1/p}\right)^p+t^p
=2^p-(2^{p-1}-1)t^p+o(t^p).
\]
It follows that
\[
2-\norm{u+v_t}_p
\sim \frac{1-2^{1-p}}{p}\norm{u-v_t}_p^p.
\]
Thus the constant $(1-2^{1-p})/p$ in Theorem~\ref{thm:positive} cannot be increased.

These examples also demonstrate a genuine distinction between the general and nonnegative settings when $p\ge2$: local cancellation between signed vectors restricts the constant to $1/(p2^{p-1})$, whereas on the nonnegative cone it improves to $(1-2^{1-p})/p$.

\end{document}